\newtheorem{thm}{Theorem}[section]
\theoremstyle{definition}
\newtheorem{defn}[thm]{Definition}
\theoremstyle{remark}
\newtheorem{rem}[thm]{Remark}
\numberwithin{equation}{section}
\def\phi{\varphi}
\def\theta{\vartheta}
\begin{document}
\title[On Cayley Identity\dots]{On Cayley Identity for Self-Adjoint Operators in Hilbert Spaces}
\author{Alexander V. Kiselev}
\author{Serguei N. Naboko}
\address{Department of Higher Mathematics and Mathematical Physics,
St.Petersburg State University, 1 Ulianovskaya Street,
St.Petersburg, St. Peterhoff 198504 Russia}
\address{Department of Higher Mathematics and Mathematical Physics,
St.Petersburg State University, 1 Ulianovskaya Street,
St.Petersburg, St. Peterhoff 198504 Russia}
\email{alexander.v.kiselev@gmail.com} \subjclass{Primary 47A10;
Secondary 47A55}
\thanks{The first author gratefully acknowledges support received from the EPSRC grant EP/D00022X/2. The first and second authors acknowledge
partial support from the RFBR grants 09-01-00515-a and
11-01-90402-Ukr\_f\_a.}
\begin{abstract}
We prove an analogue to the Cayley identity for an arbitrary self-adjoint operator in a Hilbert space. We also provide two new
ways to characterize vectors belonging to the singular spectral subspace in terms of the analytic properties of the
resolvent of the operator, computed on these vectors. The latter are analogous to those used routinely in the scattering theory for the
absolutely continuous subspace.
\end{abstract}
\maketitle

\specialsection{Introduction}
If $M$ is a matrix in $\mathbb C^n$ and $d_M(\lambda):=\det(M-\lambda)$ is its characteristic polynomial, the
celebrated Cayley identity says that
$$
d_M(M)\equiv 0.
$$
In \cite{KiselevNaboko} we have studied the ``almost Hermitian'' spectral subspace of a nonself-adjoint, non-dissipative operator $L$. The following
criterion has been established: a nonself-adjoint operator possesses almost Hermitian spectrum (i.e., its almost Hermitian spectral subspace
coincides with the Hilbert space $H$) iff a natural generalization of Cayley identity hold both for the operator itself and its adjoint.

This generalization of Cayley identity is formulated in terms of the so-called weak outer annihilation. The following definition of it has been
suggested:
\begin{defn}\label{oldone}
Let $\gamma(\lambda)$ be an outer \cite{Hoffman} in the upper half-plane $\mathbb
C_+$ uniformly bounded scalar analytic function. We call this function a
\emph{weak annihilator} of an operator $L$, if
\begin{equation}\label{definition1}
w-\lim_{\varepsilon\downarrow 0} \gamma(L+i\varepsilon) =0.
\end{equation}
\end{defn}
As a by-product of the aforementioned analysis of nonself-adjoint operators and using essentially nonself-adjoint techniques (i.e., the dilation
of a dissipative operator, see \cite{Nagy}) we have further been able to prove, that a self-adjoint operator $A$ has trivial absolutely continuous subspace
if and only if $A$ is weakly annihilated in the sense of the above definition.

Moreover, the corresponding outer analytic function admits an explicit choice, i.e., it can be chosen to be equal to
the perturbation determinant $D_{A/A-iV}(\lambda)$ of the pair $A$, $A-iV$ \cite{Krein}, where $V$ is an auxiliary non-negative
trace class operator.

The natural question on the possibility to formulate a ``local''
version of this criterion for self-adjoint operators with mixed
spectrum, i.e., how to ascertain in similar terms whether the
spectrum of a given self-adjoint operator $A$ is purely singular
inside some Borel set of the real line, was posed some time ago by
Prof. David Pearson. The present paper is an attempt to give an
(in our view, so far incomplete) answer to this question. We prove
the result envisaged in two quite different flavours: the one that
we prefer (but analytic difficulties then only allow us to give
the proof under rather restrictive assumptions on the operators'
spectrum, see below) and the one that actually allows to give a
rigorous proof in the most general case.

The paper is organized as follows.

Since the functional model of a nonself-adjoint operator is of crucial importance for our approach and
the proof of our main result relies heavily upon the symmetric form of the Nagy-Foia\c s
functional model due to Pavlov \cite{pavlov2,pavlov} (see also the paper \cite{MIAN} by Naboko),
we continue with a brief introduction to the main concepts and results obtained in this area in Section 2.

Section 3 contains our main result, which may be viewed as a
generalization of the Cayley identity to self-adjoint operators
with an arbitrary spectral structure.

Finally, in Section 4 we derive two new characterizations of vectors belonging to the singular spectral
subspace of a self-adjoint operator in terms of the analytic properties of the
resolvent of the operator, computed on these vectors. The latter are analogous to those used routinely in the scattering theory for the
absolutely continuous subspace.

At this time, we have elected to postpone the discussion of
possible applications, since the meaningful examples we have in
mind, i.e., the examples in which the singularity of the spectrum
in a given set is either unknown or cannot be obtained by some
simpler classical techniques, do require substantial an
non-trivial analysis to be fully considered.

\specialsection{The functional model of a dissipative operator} In the present section we
briefly recall the functional model of a nonself-adjoint operator
constructed in \cite{Nagy,pavlov} in the dissipative case and then
extended in \cite{NabokoI,NabokoII,MIAN,RyzhovPHD} to the case of
a wide class of non-dissipative operators. We consider a class of
nonself-adjoint operators of the form \cite{MIAN} $ L=A+iV,$ where
$A$ is a self-adjoint operator in $H$ defined on the domain $D(A)$
and the perturbation $V$ admits the factorization $V={\alpha J
\alpha}/{2}$, where $\alpha$ is a non-negative self-adjoint
operator in $H$ and $J$ is a unitary operator in an auxiliary
Hilbert space $E$, defined as the closed range of the operator
$\alpha$: $E\equiv \overline{R(\alpha)}$. This factorization
corresponds to the polar decomposition of the operator $V$. It can
also be easily generalized to the ``node'' case \cite{VeselovPHD},
where $J$ acts in an auxiliary Hilbert space $\mathfrak H$ and
$V=\alpha^* J \alpha/2$, $\alpha$ being an operator acting from
$H$ to $\mathfrak H$. In order that the expression $A+iV$ be
meaningful, we impose the condition that $V$ be $(A)$-bounded with
relative bound less than 1, i.~e., $D(A)\subset D(V)$ and for some
$a$ and $b$ ($a<1$) the condition $ \|Vu\|\leq
a\|Au\|+b\|u\|,\quad u\in D(A) $ is satisfied, see \cite{Kato}.
Then the operator $L$ is well-defined on the domain $D(L)=D(A)$.

Alongside with the operator $L$ we are going to consider the
maximal dissipative operator $L^{\|}=A+i\frac{\alpha^2}2$ and the
one adjoint to it, $L^{-\|}\equiv L^{\|*}=A-i\frac{\alpha^2}2$.
Since the functional model for the dissipative operator $L^{\|}$
will be used below, we require that $L^{\|}$ is completely
nonself-adjoint, i.~e., that it has no reducing self-adjoint
parts. This requirement is not restrictive in our case due to
Proposition 1 in \cite{MIAN}.

We also note that the functional model in the general case of
operators with not necessarily additive imaginary part and with
non-empty resolvent set has been developed in \cite{RyzhovPHD}.

Now we are going to briefly describe a construction of the
self-adjoint dilation of the completely nonself-adjoint
dissipative operator $L^{\|}$, following \cite{Nagy,pavlov}, see
also \cite{MIAN}.

The characteristic function $S(\lambda)$ of the operator $L^{\|}$
is a contractive, analytic operator-valued function acting in the
Hilbert space $E$, defined for $Im \lambda>0$ by
\begin{equation}\label{sdef}
S(\lambda)=I+i\alpha(L^{-\|}-\lambda)^{-1}\alpha.
\end{equation}
In the case of an unbounded $\alpha$ the characteristic function
is first defined by the latter expression on the manifold $E\cap
D(\alpha)$ and then extended by continuity to the whole space $E$.
The definition given above makes it possible to consider
$S(\lambda)$ for $Im \lambda<0$ with
$S(\overline{\lambda})=(S^*(\lambda))^{-1}$ provided that the
inverse exists at the point $\lambda$. Finally, $S(\lambda)$
possesses  boundary values on the real axis in the strong topology
sense: $S(k)\equiv S(k+i0),\ k\in \mathbb{R}$ (see \cite{Nagy}).

Consider the model space $\mathcal{H}=L_2(
\begin{smallmatrix}I&S^*\\
S&I\\
\end{smallmatrix}),$ which is defined in \cite{pavlov} (see also \cite{Nik1} for description of
general coordinate-free models) as Hilbert space of two-component
vector-functions $(\tilde g,g)$ on the axis ($\tilde g(k),g(k) \in
E, k\in\mathbb{R}$) with metric
$$
\left\langle\binom{\tilde g}{g},\binom{\tilde g}{g}\right\rangle=
\int_{-\infty}^{\infty}\left\langle
\begin{pmatrix}I& S^*(k)\\S(k)&I\\
\end{pmatrix}\binom{\tilde g(k)}{g(k)},\binom{\tilde g(k)}{g(k)}
\right\rangle_{E\oplus E}dk.
$$
It is assumed here that the set of two-component functions has
been factored by the set of elements with norm equal to zero.
Although we consider $(\tilde g,g)$ as a symbol only, the formal
expressions $g_-:=(\tilde g + S^{*} g)$ and $g_+:=(S\tilde g +g)$
(the motivation for the choice of notation is self-evident from
what follows) can be shown to represent some true
$L_2(E)$-functions on the real line. In what follows we plan to
deal mostly with these functions.

Define the following orthogonal subspaces in $\mathcal{H}:$
$$
D_-\equiv \binom{0}{H_-^2(E)},\ D_+\equiv \binom{H_+^2(E)}{0},\
K\equiv \mathcal{H}\ominus(D_-\oplus D_+),
$$
where $H^2_{+(-)}(E)$ denotes the Hardy class \cite{Nagy} of
analytic functions $f$ in the upper (lower) half-plane taking
values in the Hilbert space $E$. These subspaces are ``incoming''
and ``outgoing'' subspaces, respectively, in the sense of
\cite{Lax}.

The subspace $K$ can be described as $K=\{(\tilde g,g)\in
\mathcal{H}:\ g_-\equiv\tilde g+S^*g\in H^2_-(E),g_+\equiv S\tilde
g+g\in H^2_+(E)\}.$ Let $P_K$ be the orthogonal projection of the
space $\mathcal{H}$ onto $K$, then
$$
P_K\binom{\tilde g}{g}=\binom{\tilde g - P_+(\tilde g+S^* g)}
{g-P_-(S\tilde g +g)},
$$
where $P_{\pm}$ are the orthogonal Riesz projections of the space
$L_2(E)$ onto $H^2_{\pm} (E).$

The following Theorem holds \cite{Nagy,pavlov}:
\begin{thm}
The operator $(L^{\|}-\lambda_0)^{-1}$ is unitarily equivalent to
the operator $P_K (k-\lambda_0)^{-1}|_K$ in the space $K$ for all
$\lambda_0, Im \lambda_0<0$.
\end{thm}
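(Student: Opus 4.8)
The plan is to realize the abstract model space $\mathcal{H}$ as the spectral representation of the minimal self-adjoint dilation of $L^{\|}$, and then to read off the asserted unitary equivalence directly from the dilation property. First I would construct the minimal self-adjoint dilation $\mathcal{A}$ of the completely nonself-adjoint dissipative operator $L^{\|}$ in an enlarged Hilbert space $\mathfrak{H}\supset H$. Following the Lax--Phillips and Nagy--Foia\c{s} scheme one may take $\mathfrak{H}=L^2(\mathbb{R}_-;E)\oplus H\oplus L^2(\mathbb{R}_+;E)$ and define $\mathcal{A}$ as the operator acting on the two channels by $i\,d/dx$ and coupled to $L^{\|}$ through boundary conditions encoding the factorization $Im\,L^{\|}=\alpha^2/2$. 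Two routine facts have to be verified here: that $\mathcal{A}$ is self-adjoint (the coupling being chosen precisely so that the boundary form vanishes), and that it satisfies the dilation identity
\[
(L^{\|}-\lambda_0)^{-1}=P_H(\mathcal{A}-\lambda_0)^{-1}\big|_H,\qquad Im\,\lambda_0<0,
\]
where $P_H$ is the orthogonal projection of $\mathfrak{H}$ onto $H$; the restriction $Im\,\lambda_0<0$ is exactly the half-plane in which $(L^{\|}-\lambda_0)^{-1}$ is bounded by dissipativity. Minimality of the dilation follows from the complete nonself-adjointness of $L^{\|}$, which we have assumed.

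Next I would diagonalize $\mathcal{A}$. Since $L^{\|}$ is completely nonself-adjoint, its minimal dilation has purely absolutely continuous spectrum filling the whole real line, so the spectral theorem supplies a unitary map $\Phi$ onto a space of $E\oplus E$-valued square-integrable functions on $\mathbb{R}$ that intertwines $\mathcal{A}$ with multiplication by the independent variable $k$. The content of Pavlov's symmetric form is that $\Phi$ can be normalized so that its range is precisely the model space $\mathcal{H}$: the two copies of $L^2(E)$ correspond to the incoming and outgoing channels, while the boundary values of $\alpha(\mathcal{A}-\lambda)^{-1}\alpha$ on $H$ reproduce, through \eqref{sdef}, the characteristic function $S$ figuring in the Gram matrix of the model metric. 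Under this normalization the incoming and outgoing subspaces are carried onto $D_-=\binom{0}{H_-^2(E)}$ and $D_+=\binom{H_+^2(E)}{0}$, so that the residual subspace $H$ is carried onto $K=\mathcal{H}\ominus(D_-\oplus D_+)$.

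Granting these two ingredients the conclusion is immediate. Since $\Phi\mathcal{A}\Phi^{-1}$ is multiplication by $k$ and $\Phi$ maps $H$ unitarily onto $K$, we have $\Phi P_H\Phi^{-1}=P_K$, and applying $\Phi$ to the dilation identity yields
\[
\Phi(L^{\|}-\lambda_0)^{-1}\Phi^{-1}=P_K\,\Phi(\mathcal{A}-\lambda_0)^{-1}\Phi^{-1}\big|_K=P_K(k-\lambda_0)^{-1}\big|_K,
\]
which is precisely the assertion.

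The main obstacle is the normalization step, i.e.\ showing that the spectral representation of $\mathcal{A}$ can be arranged so that the function $S$ defined in \eqref{sdef} is exactly the off-diagonal symbol of the model metric and that the channel subspaces become the Hardy-class incoming and outgoing subspaces. This amounts to computing the wave operators of $\mathcal{A}$ relative to the decoupled channel dynamics --- equivalently, the boundary limits of $\alpha(\mathcal{A}-\lambda)^{-1}\alpha$ --- and matching them against $S$; it is here that the specific structure $Im\,L^{\|}=\alpha^2/2$ and the definition \eqref{sdef} are genuinely used. The remaining steps --- self-adjointness of the dilation, the dilation identity, and the final intertwining --- are comparatively mechanical.
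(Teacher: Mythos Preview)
The paper does not actually supply a proof of this theorem: it is stated as a known result with the citation \cite{Nagy,pavlov}, and the surrounding text merely summarizes the model construction and records the consequence that multiplication by $k$ in $\mathcal H$ is the minimal self-adjoint dilation of $L^{\|}$. So there is no ``paper's own proof'' to compare against.

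That said, your outline is essentially the standard argument one finds in the cited literature (Pavlov's construction and the Nagy--Foia\c{s} dilation theory): build the self-adjoint dilation on $L^2(\mathbb R_-;E)\oplus H\oplus L^2(\mathbb R_+;E)$, verify the dilation identity, pass to the spectral representation, and identify the incoming/outgoing channels with the Hardy subspaces $D_\pm$ so that $H$ lands on $K$. Your identification of the genuine work --- matching the spectral representation to the model metric via the characteristic function $S$ --- is accurate; this is precisely Pavlov's contribution. There is no gap in the strategy; the details you flag as ``routine'' (self-adjointness of the coupled operator, the resolvent compression formula) are indeed mechanical once the boundary conditions are written down correctly.
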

This means, that the operator of multiplication by $k$ in
$\mathcal H$ serves as a minimal ($clos_{Im \lambda\not=0}
(k-\lambda)^{-1}K=\mathcal{H}$) self-adjoint dilation \cite{Nagy}
of the operator $L^{\|}.$

\specialsection{Characterization of singular spectrum in terms of
weak annihilation}

In the present section, we attempt to provide a localized
criterion of pure singularity of the spectrum of a general
self-adjoint operator inside a given set of the real line,
building upon the technique and approach developed in
\cite{KiselevNaboko}. It is worth mentioning that not only the
proofs of our results in this direction exploit essentially
nonself-adjoint (in particular, functional model related)
techniques, but even certain crucial objects of the
nonself-adjoint spectral theory appear already in their
statements.

Our next Theorem in our view constitutes the most natural
localization of the corresponding ``global'' result of
\cite{KiselevNaboko}. Unfortunately, we are only able to prove the
result in this natural form in the case when both ends of the
interval $\Delta$ where one wants to ascertain pure singularity of
the spectrum are located inside a spectral gap. Any attempt to get
rid of this rather horrible restriction requiring crucial a-priori
information on the spectral structure fails due to the lack of
control over the annihilating function at the endpoints of the
interval $\Delta$. It seems that in the general setting one has to
resort to a quite different (and less natural) definition of
annihilation (see Theorem \ref{sacase2} below).

\begin{thm}\label{sacase}
Let $A$ be a (possibly, unbounded) self-adjoint operator in the
Hilbert space $H$. Let a point $\lambda_0\in\mathbb R$ belong together with some neighborhood $\Delta$ to the resolvent set of $A$.
Then the following two statements are
equivalent.
\begin{itemize}
\item[(i)] The spectrum of $A$ to the left of the point $\lambda_0$ is purely singular;
\item[(ii)]
There exists an outer bounded in the upper half-plane non-trivial (i.e., non-constant) scalar
function $\gamma(\lambda)$ with real boundary values almost everywhere on $(\lambda_0,+\infty)$ and non-real boundary values
almost everywhere on $(-\infty,\lambda_0)$, weakly annihilating the operator
$A$, i.e.,
$$
w-\lim_{\varepsilon\downarrow 0} (\gamma(A+i\varepsilon)-\gamma_*(A-i\varepsilon))=0,
$$
where $\gamma_*(\lambda):=\bar\gamma(\bar\lambda)$ is an outer bounded in the lower half-plane analytic function.
\end{itemize}
\end{thm}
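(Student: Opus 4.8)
The plan is to route everything through the functional model of Section 2 together with the global criterion of \cite{KiselevNaboko}, using the spectral gap at $\lambda_0$ to split the problem. Since $\lambda_0$ together with $\Delta$ lies in the resolvent set, the spectral measure $E_A$ of $A$ decomposes the space orthogonally as $H=H_-\oplus H_+$ with $H_-=E_A((-\infty,\lambda_0))H$ and $H_+=E_A((\lambda_0,+\infty))H$, reducing $A$ to $A=A_-\oplus A_+$ with no spectrum lost in the gap. The whole argument rests on rewriting the annihilation condition through the spectral theorem: for $f,g\in H$,
\[
\langle(\gamma(A+i\varepsilon)-\gamma_*(A-i\varepsilon))f,g\rangle=2i\int_{\mathbb R}\operatorname{Im}\gamma(\lambda+i\varepsilon)\,d\langle E_A(\lambda)f,g\rangle ,
\]
so that weak annihilation is exactly the statement that the bounded harmonic function $\operatorname{Im}\gamma(\lambda+i\varepsilon)$ integrates to zero, in the limit $\varepsilon\downarrow0$, against every spectral measure of $A$.

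For the implication $(ii)\Rightarrow(i)$ I would split each spectral measure $d\langle E_A f,g\rangle$ into its absolutely continuous and singular parts. On the absolutely continuous part the integrand converges boundedly almost everywhere to $\operatorname{Im}\gamma(\lambda+i0)$, so dominated convergence applies directly. The singular part I would treat by writing $\operatorname{Im}\gamma(\lambda+i\varepsilon)$ as the Poisson integral of its boundary values and, via Fubini, transferring the Poisson averaging onto the singular measure, reducing matters to the Lebesgue-almost-everywhere vanishing of the Poisson integral of a singular measure. Granting that the singular contribution vanishes, annihilation collapses to $\int\operatorname{Im}\gamma(\lambda+i0)\,d\langle E_A^{\mathrm{ac}}(\lambda)f,g\rangle=0$ for all $f,g$, i.e. $\operatorname{Im}\gamma(\cdot+i0)=0$ $E_A^{\mathrm{ac}}$-almost everywhere. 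Since by hypothesis $\operatorname{Im}\gamma(\lambda+i0)\neq0$ for Lebesgue-almost every $\lambda<\lambda_0$ while $E_A^{\mathrm{ac}}$ is absolutely continuous with respect to Lebesgue measure, this forces $E_A^{\mathrm{ac}}((-\infty,\lambda_0))=0$, which is precisely pure singularity of the spectrum to the left of $\lambda_0$.

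For the converse I would invoke the global result of \cite{KiselevNaboko} applied to the reduced operator $A_-$: triviality of its absolutely continuous subspace yields an outer, bounded, non-constant weak annihilator of $A_-$, which may be taken to be the perturbation determinant $D_{A_-/A_--iV_-}$ \cite{Krein} for an auxiliary non-negative trace class $V_-$, the functional model and complete nonself-adjointness (Proposition~1 of \cite{MIAN}) entering exactly as in Section~2. The remaining task is function-theoretic: to manufacture a single outer function $\gamma$, bounded in $\mathbb C_+$, reproducing the annihilating behaviour of this determinant on $(-\infty,\lambda_0)$ while possessing real boundary values almost everywhere on $(\lambda_0,+\infty)$. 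Here the gap is decisive, since the interval $\Delta$ around $\lambda_0$ provides the analytic room to prescribe $\log|\gamma|$ so that its conjugate function (the boundary argument of $\gamma$) is an integer multiple of $\pi$ to the right of $\lambda_0$ yet genuinely non-real to the left. With such $\gamma$ in hand, annihilation of all of $A$ follows: the contribution of $A_+$ is killed because $\operatorname{Im}\gamma(\lambda+i0)=0$ on $(\lambda_0,+\infty)$ (its singular part again dispatched by the Poisson argument, using the control afforded by $\Delta$), while the contribution of $A_-$ is controlled by the annihilation inherited from the determinant.

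The hard part, in both directions, will be the behaviour of $\gamma$ at the edge $\lambda_0$ and across the gap. Making $\int\operatorname{Im}\gamma(\lambda+i\varepsilon)\,d\mu_{\mathrm{sing}}(\lambda)\to0$ rigorous for the singular spectral measures, and dually constructing $\gamma$ with the exact prescribed boundary dichotomy, both demand uniform control of the annihilating function near the endpoints of $\Delta$: a singular measure may concentrate arbitrarily close to $\lambda_0$, precisely where $\gamma$ must switch from real to non-real boundary values, and the Poisson transfer above is delicate exactly there. It is this lack of endpoint control that forces the hypothesis that $\lambda_0$ and $\Delta$ lie in a spectral gap, and that obstructs the more natural formulation in the general mixed-spectrum case (compare Theorem~\ref{sacase2}).
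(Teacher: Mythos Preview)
Your implication (ii)$\Rightarrow$(i) contains a genuine error in the Poisson step. After Fubini you obtain $\int \Im\gamma(t+i0)\,(P_\varepsilon*\mu_{\mathrm{sing}})(t)\,dt$, and while $(P_\varepsilon*\mu_{\mathrm{sing}})(t)\to 0$ for Lebesgue-a.e.\ $t$, the total mass $\int (P_\varepsilon*\mu_{\mathrm{sing}})(t)\,dt = \mu_{\mathrm{sing}}(\mathbb R)$ is conserved, so no dominated convergence is available and the integral need not tend to zero. Indeed it cannot vanish in general: a singular measure supported to the left of $\lambda_0$, exactly where $\Im\gamma(\cdot+i0)\neq 0$, will give a non-vanishing contribution. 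The repair is much simpler than your argument: take $f\in H_{ac}$ from the start (this is what the paper does), so that $d\mu_{f,g}$ is absolutely continuous and dominated convergence applies directly, yielding $\int_\delta \Im\gamma(k+i0)\,d\mu_{f,g}(k)=0$ for every Borel $\delta\subset(-\infty,\lambda_0)$ and hence $\mu_{f,g}\equiv 0$.

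The harder direction (i)$\Rightarrow$(ii) has a more structural gap. You invoke the global result of \cite{KiselevNaboko} for $A_-$ as a black box, obtaining an outer annihilator $\gamma_0=D_{A_-/A_--iV_-}$ with $w\text{-}\lim\gamma_0(A_-+i\varepsilon)=0$, and then propose to multiply by a correcting outer factor to force real boundary values on $(\lambda_0,+\infty)$. But weak annihilation is not stable under such multiplication: from $\gamma_0(A_-+i\varepsilon)\to 0$ weakly and $\|\psi(A_-+i\varepsilon)\|\le\|\psi\|_\infty$ one cannot conclude $(\psi\gamma_0)(A_-+i\varepsilon)\to 0$ weakly, because the adjoint factor acting on the test vector varies with $\varepsilon$. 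The paper avoids this by \emph{not} treating the global result as a black box: it builds the functional model for $A+iV$ on all of $H$, expresses $\gamma(A+i\varepsilon)$ through the model resolvent formula, and shows that the limit computation only requires $|\gamma(t)/\gamma_A(t)|\le C$ uniformly on $(-\infty,-\delta_0)$ (with $\gamma_A=\det\Theta_A$). The construction of $\gamma=\gamma_1\gamma_2$ is then tailored precisely to preserve this ratio bound while achieving the prescribed boundary behaviour. Your Poisson argument for the $H_+$ contribution \emph{does} work, incidentally, because there the gap separates the support of the spectral measure from the set where $\Im\gamma(\cdot+i0)\neq 0$; but on $H_-$ you must open the black box.
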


\begin{proof}
Choose $V$ to be a trace class non-negative self-adjoint operator
in the Hilbert space $H$ such that
\begin{equation}\label{maximalityA}
\bigvee_{Im\ \lambda\not=0} (A-\lambda)^{-1}VH=H.
\end{equation}
Clearly, such choice is always possible.

We follow the approach developed in \cite{MIAN} for the operators
$L$ admitting the representation
$L_\kappa=A+\alpha\kappa\alpha/2$, where $\alpha\geq 0$ is a
non-negative operator in the Hilbert space $H$ and $\kappa$ is a
bounded operator in the subspace $E$, being the closure of the
range of $\alpha$. Choose $\alpha$ to be a Hilbert-Schmidt class
operator defined by the formula $\alpha =\sqrt{2V}\in \mathfrak
S_2$. Then the operator $L_\kappa$ is well-defined on the domain
$D(L_\kappa)=D(A)$. Moreover, $L_\kappa\equiv A$ when $\kappa=0$,
i.e., $L_0\equiv A$. Consider the dissipative operator $L^\|\equiv
A+iV$ (this operator coincides with $L_{iI}$). Clearly, it is a
maximal dissipative operator in $H$; moreover, it is easy to see
that the condition \eqref{maximalityA} guarantees that it is also
completely nonself-adjoint.

Construct the functional model based on the operator $L^\|$ (see
Section 1 above). In the corresponding dilation space $\mathcal H$
the following formulae describe the action of the resolvent
$(A-\lambda)^{-1}$ on all vectors $(\tilde g,g)\in K$, as above
$K$ being the model image of $H$ (see \cite{MIAN}):
\begin{gather}\label{samodel}
(A-\lambda)^{-1}\binom{\tilde g}{g}= P_K \frac 1{(k-\lambda)}
\binom{\tilde g} {g-\frac 1 2 \left( I+
(S^*(\overline\lambda)-I)\frac 1 2\right)^{-1}g_-(\lambda)},\quad
Im\
\lambda<0\\
(A-\lambda)^{-1}\binom{\tilde g}{g}= P_K \frac 1{(k-\lambda)}
\binom{\tilde g- \frac 1 2 \left( I+ (S(\lambda)-I)\frac 1
2\right)^{-1} g_+(\lambda)} {g},\quad Im\ \lambda>0.
\end{gather}
Here $S(\lambda)$ is the characteristic function of completely
nonself-adjoint maximal dissipative operator $L^\|$, all the other
notation has already been introduced above.

We introduce the following notation for the operator functions,
appearing in this representation: $\Theta_A(\lambda):=I+
(S(\lambda)-I)\frac 1 2$ and $\Theta_A'(\lambda):=I+
(S^*(\overline\lambda)-I)\frac 1 2$. The functions $\Theta_A$ and
$\Theta_A'$ are bounded analytic operator functions in half-planes
$\mathbb C_+$ and $\mathbb C_-$, respectively.

Recall that the characteristic function $S(\lambda)$ is a
contraction in the upper half plane. It follows that, since
$\Theta_A(\lambda)=(I+S(\lambda))/2$ and $\Theta_A'(\lambda)=
(I+S^*(\overline\lambda))/2$, they are outer contractions (see
\cite{Nagy}) in the half-planes $\mathbb C_+$ and $\mathbb C_-$,
respectively.

By definition of $S(\lambda)$, both operator functions also have
well-defined outer \cite{NabokoVeselov,Nagy} determinants\footnote{%
It is easy to see that $\gamma_A(\lambda)$ in fact coincides with
the perturbation determinant $D_{A/A-iV}(\lambda)$ of the pair
$A$, $A-iV$ \cite{Krein}}
$\gamma_A$, $\gamma_A'$, bounded (in fact, contractive) in their
respective half-planes. It is also clear that
$\gamma_A'(\lambda)=\overline{\gamma_A(\overline\lambda)}$. We
remark, that $\gamma_A(\lambda)$ is a clearly non-zero function since $\lim_{\tau\to+\infty}
\gamma_A(i\tau)=1$.

W.l.o.g. assume, that the point $\lambda=0$ together with its
neighborhood $\Delta$ belongs to the resolvent set of the operator $A$. It
follows that since the operator $L^{\|}$ is completely
nonself-adjoint and dissipative, the same neighborhood also
belongs to its resolvent set. Thus the function $S(\lambda)$
admits analytic continuation to $\mathbb C_-$ through the named
neighborhood of zero and the determinant $\gamma_A(\lambda)$ is
$C^\infty$ there.

Since $\gamma(\lambda)$ is an outer function, it admits the
following representation in terms of the logarithm of its boundary
values on the real line:
$$
\gamma_A(\lambda)=e^{ic}\exp\left\lbrace\frac i\pi \int_{\mathbb
R}\left(\frac 1{\lambda -t} +\frac t{1+t^2} \log |\gamma(t)| dt \right)\right\rbrace,
$$
where $\gamma(t):=\gamma(t+i0)$ are the boundary values of the function $\gamma$ from above and $c$ is some real constant. From \eqref{samodel} it further
follows, that $\gamma_A (t)$ is separated from zero on $\Delta$.

Fix a point $\delta_0$ such that $-\delta_0\in \Delta$ and let
$$
\gamma_1(\lambda)=e^{ic}\exp\left\lbrace\frac i\pi \int_{-\infty}^{-\delta_0}\left(\frac 1{\lambda -t} +\frac t{1+t^2}\right) \log |\gamma_A(t)| dt\right\rbrace
$$
be the new outer (by construction), bounded in the upper half-plane function. Let further $\phi(t)$ be the
harmonic conjugate (or, in other words, the Hilbert transform) of the
function $f(t)$, equal to $\log|\gamma_A(t)|$ on $(-\infty,-\delta_0)$ and to 0 elsewhere. Clearly, the function $\phi(t)$
is itself infinitely smooth on any interval $[-\delta_1, +\infty)$ provided that $\delta_1<\delta_0$.

Choose yet another function $\gamma_2(\lambda)$ as follows:
$$
\gamma_2(\lambda)=e^{-ic}\exp\left\lbrace\frac 1\pi \int_{-\infty}^{+\infty}\left(\frac 1{\lambda -t} +\frac t{1+t^2}\right) \phi_1(t) dt\right\rbrace,
$$
where $\phi_1(t)$ is any $C^1(\mathbb R)$ function such that $\phi_1(t)\equiv\phi(t),$ $t\geq 0$.
As it is easily seen, on the right half-line $\arg \gamma_2(t+i0)=-\arg \gamma_1(t+i0)$ almost everywhere. What's more, since $\phi_1(t)$ is $C^1$ on the
real line, its harmonic conjugate is continuous and thus bounded \cite{Koosis}. It follows, that the function $\gamma_2(\lambda)$ is itself bounded and outer
in the upper half-plane, admitting the following representation:
$$
\gamma_2(\lambda)=e^{-ic}\exp\left\lbrace\frac i\pi \int_{-\infty}^{+\infty}\left(\frac 1{\lambda -t} +\frac t{1+t^2}\right) (-\widetilde\phi_1(t)) dt\right\rbrace,
$$
where $\widetilde\phi_1(t)$ is the harmonic conjugate of the function $\phi_1$.

Consider the function $\gamma(\lambda):=\gamma_1(\lambda)\gamma_2(\lambda)$. By virtue of its construction, it is a bounded outer function in the upper
half-plane with almost everywhere real boundary values on the right half-line. What's more, together with its first factor it cancels out the zeroes
of the function $\gamma_A(\lambda)$ on the interval $(-\infty,-\delta_0)$: $|\gamma(t+i\varepsilon)/\gamma_A(t+i\varepsilon)|\leq C$ uniformly in $\varepsilon$
for some finite
constant $C$ and every $t\leq -\delta_0$.

It remains to be seen that this function can be chosen in a way
such that its boundary values to the left of the point $0$ are
non-real almost everywhere. In fact, this can be safely assumed
w.l.o.g.: if not, denote by $\Omega\subset (-\infty,0)$ the set of
points where the corresponding boundary values are real almost
everywhere. Then consider a non-negative smooth enough function
$g(k)$ having its support equal to the closure of $\Omega$ and
define $\hat g(\lambda):=\int \frac{g(k)}{k-\lambda}dk$.
Multiplication by this outer bounded factor clearly equips the
function $\gamma(\lambda)$ with the properties required by the
Theorem.

We will now prove that $\gamma(\lambda)$ weakly annihilates the
self-adjoint operator $A$ in the sense of the Theorem.

First, let $u$ belong to the spectral subspace $E_A(0,+\infty)H$
of the operator $A$, where $E_A(\cdot)$ is the operator-valued
spectral measure associated with $A$. Then by the spectral theorem
and by Lebesgue dominated convergence theorem it is easy to see
that
$$
\lim_{\varepsilon\downarrow 0} \langle(\gamma(A+i\varepsilon)-\gamma_*(A-i\varepsilon))u,v\rangle=\int_0^{+\infty} (\gamma(k+i0)-\bar\gamma(k+i0) d\mu_{u,v} (k)=0
$$
for all $v$ in $H$ (in a nutshell, we have used the fact that the function $\gamma_*$ by its construction is an analytic continuation of the function $\gamma$ to the
lower half-plane).

It remains to be seen that if $u=E_A(-\infty,0)H$, then
$$
\lim_{\varepsilon\downarrow 0} \langle\gamma(A+i\varepsilon)u,v\rangle=0
$$
and
$$
\lim_{\varepsilon\downarrow 0} \langle\gamma_*(A-i\varepsilon)u,v\rangle=0
$$
for all $v$ in $H$, provided that the spectrum of the operator $A$ is purely singular to the left of the point zero. We will check the first identity above,
the second being verified analogously.

The bounded (due to v. Neumann inequality \cite{Nagy} or,
alternatively, due to the spectral theorem) operator
$\gamma(A+i\varepsilon)$ is defined by the Riesz-Dunford
integral,
\begin{multline*}
\langle \gamma(A+i\varepsilon) u, v\rangle= \frac 1{2\pi i}
\left(\int_{-\infty+3i\varepsilon/2}^{-\delta_0+3i\varepsilon/2}-\int_{-\infty+i\varepsilon/2}^{-\delta_0+i\varepsilon/2}\right)
\gamma_A(\lambda) \left\langle (A+i\varepsilon-\lambda)^{-1}
u,v\right\rangle d\lambda.
\end{multline*}

Using the model representation \eqref{samodel} we then immediately
obtain:
\begin{multline}\label{mainforexistenceA}
\left\langle \gamma(A+i\varepsilon)\begin{pmatrix} \tilde g \\ g
\end{pmatrix},\begin{pmatrix}
\tilde f \\ f
\end{pmatrix}\right\rangle = \left\langle \gamma(k+i\varepsilon) \begin{pmatrix}
\tilde g \\ g
\end{pmatrix},
\begin{pmatrix}
\tilde f \\ f
\end{pmatrix}\right\rangle+\\
\frac 1{2\pi i} \int_{-\infty}^{-\delta_0}
\gamma(t+i\frac\varepsilon2)\left \langle \frac 1
{k-(t-i\frac\varepsilon2)} \begin{pmatrix}
 0 \\
\frac 1 2\Theta_A^{'\ -1}(t-i\frac\varepsilon2)
g_-(t-i\frac\varepsilon2)
\end{pmatrix}, \begin{pmatrix}
\tilde f \\ f
\end{pmatrix}\right\rangle dt-\\
\frac 1{2\pi i} \int_{-\infty}^{-\delta_0}
\gamma(t+i\frac{3\varepsilon}2)\left \langle \frac 1
{k-(t+i\frac\varepsilon2)} \begin{pmatrix}
\frac 1 2\Theta_A^{-1}(t+i\frac\varepsilon2) g_+(t+i\frac\varepsilon2) \\
0
\end{pmatrix}, \begin{pmatrix}
\tilde f \\ f
\end{pmatrix}\right\rangle dt.
\end{multline}

Rewriting
$\Theta'_A(\lambda)=\Omega'(\lambda)/\bar\gamma_A(\bar\lambda)$
and $\Theta_A(\lambda)=\Omega(\lambda)/\gamma_A(\lambda)$ with
bounded in the lower (resp., upper) half-plane operator function
$\Omega'$ (resp., $\Omega$), it is now easy to see that the last
expression assumes the following form:
\begin{multline}\label{existence2}
\left\langle \gamma(A+i\varepsilon)\begin{pmatrix}
\tilde g \\ g
\end{pmatrix},\begin{pmatrix}
\tilde f \\ f
\end{pmatrix}\right\rangle = \left\langle \gamma(k+i\varepsilon) \begin{pmatrix}
\tilde g \\ g
\end{pmatrix},
\begin{pmatrix}
\tilde f \\ f
\end{pmatrix}\right\rangle+\\
\int_{-\infty}^{-\delta_0}
\frac{\gamma(t+i\frac\varepsilon2)}{\overline{\gamma_A(t+i\frac\varepsilon2)}}
\left \langle
\frac 1 2\Omega'(t-i\frac\varepsilon2) g_-(t-i\frac\varepsilon2), f_+(t+i\frac\varepsilon2)
\right\rangle dt-\\
\int_{-\infty}^{+\infty}
\frac{\gamma(t+i\frac{3\varepsilon}2)}{\gamma_A(t+i\frac{\varepsilon}2)}
\left \langle
\frac 1 2\Omega(t+i\frac\varepsilon2) g_+(t+i\frac\varepsilon2),
f_-(t-i\frac\varepsilon2)
\right\rangle dt.
\end{multline}
Due to analytic properties of the functions $g_\pm\in
H_2^\pm(E)$, $f_\pm\in H_2^\pm(E)$ the latter expression has a
limit as $\varepsilon$ tends to $0$ and by Lebesque dominated convergence
theorem and Schwartz inequality
\begin{multline}\label{check this}
\lim_{\varepsilon\downarrow 0}\left\langle \gamma(A+i\varepsilon)\begin{pmatrix}
\tilde g \\ g
\end{pmatrix},\begin{pmatrix}
\tilde f \\ f
\end{pmatrix}\right\rangle=\\
\int_{-\infty}^{-\delta_0} [
\langle \gamma \tilde f, g_-\rangle
+\langle \gamma f, g_+ \rangle
+\frac {\gamma(t)}{{\gamma_A(t)}}\langle \frac 1 2\Omega g_+,f_-\rangle
+\frac {\gamma(t)}{\overline{\gamma_A(t)}} \langle \frac 1 2\Omega' g_-,f_+\rangle
]dt.
\end{multline}
Here $\int [
\langle \gamma \tilde f, g_-\rangle
+\langle \gamma f, g_+ \rangle] dt= \langle \gamma(\tilde g, g), (\tilde f, f)
\rangle$ and therefore represents a meaningful object.

In order to prove that this limit is actually equal to zero, we
recall \cite{NabokoVeselov,VasuninMakarov} that for all $(\tilde g,g) \in H_s(A)$
and for all $(\tilde f, f)\in K$
\begin{equation}\label{test}
\left\langle
[(L-k-i\varepsilon)^{-1}-(L-k+i\varepsilon)^{-1}]\begin{pmatrix}
\tilde g \\ g
\end{pmatrix},\begin{pmatrix}
\tilde f \\ f
\end{pmatrix}
\right\rangle \underset{\varepsilon\to 0}{\longrightarrow} 0
\end{equation}
for a.~a. real $k$. Again taking into account formulae describing
the action of the resolvent of the operator $L$ in the model
representation in upper and lower half-planes, consider the
following expression for arbitrary vectors $(\tilde g,g)\in H_s(A)$, $(\tilde f,f)\in K\equiv H$:
\begin{multline*}
\frac 1{2\pi i}\gamma(t+i\varepsilon)\left\langle
[(A-t-i\varepsilon)^{-1}-(A-t+i\varepsilon)^{-1}]\begin{pmatrix}
\tilde g \\ g
\end{pmatrix},\begin{pmatrix}
\tilde f \\ f
\end{pmatrix}\right\rangle=\\
\frac{\gamma(t+i\varepsilon)}{2 \pi i}\int_{-\infty}^{-\delta_0} \frac {2i \varepsilon}{(k-t)^2+\varepsilon^2}\left\langle \begin{pmatrix}
\tilde g \\ g
\end{pmatrix},\begin{pmatrix}
\tilde f \\ f
\end{pmatrix}\right\rangle dk+\\
\frac{\gamma(t+i\varepsilon)}{{\gamma_A(t+i\varepsilon)}}\left\langle \frac 1 2\Omega(t+i\varepsilon)g_+(t+i\varepsilon),f_-(t-i\varepsilon)
\right\rangle+\\
\frac{\gamma(t+i\varepsilon)}{\overline{\gamma_A(t+i\varepsilon)}}\left\langle \frac 1 2\Omega'(t-i\varepsilon)g_-(t-i\varepsilon),f_+(t+i\varepsilon)
\right\rangle
\end{multline*}
(cf. \eqref{existence2}). The latter expression has a limit for a.~a. $t\in \mathbb
R$, equal to the integrand in \eqref{check this}. On the other
hand, from \eqref{test} it follows, that this limit is identically
equal to zero for a.~a. $t$. This observation completes the proof.

Conversely, let the self-adjoint operator $A$ possess a weak outer
bounded annihilator $\gamma(\lambda)$ in the sense of the Theorem. Let the vector $u\not= 0$, $u\in E(-\infty,0)H$
belong to the absolutely continuous spectral subspace $H_{ac}$.
Then, again by the spectral theorem and by Lebesgue dominated
convergence theorem it is easy to see that
$$
\int_\delta (\gamma(k+i0)-\bar\gamma(k+i0) )d\mu_{u,v} (k)=0
$$
(by taking $E_A(\delta)v$ instead of $v$) for an arbitrary Borel
set $\delta\subset (-\infty,0)$ and the finite absolutely
continuous complex measure \cite{Birman} $d\mu_{u,v}(k):=\langle
dE_A(k)u,v\rangle$, where as above $E_A$ is the operator valued
spectral measure of the operator $A$ and $v$ is an arbitrary
element of $H$. Since boundary values of $\gamma$ are non-zero
almost everywhere on the real line and by assumption these
boundary values are non-real almost everywhere, this implies that
the measure $d\mu_{u,v}\equiv 0$ for all $v\in H$.

This completes the proof.
\end{proof}

\begin{rem}
The last Theorem can of course be easily generalized together with
the proof given to the situation when the set, where one tests the
singularity of the spectrum of the operator $A$, is an arbitrary
finite or infinite interval of the real line or even a finite unit
of such disjoint intervals.
\end{rem}

\begin{rem}
Note that the existence of a non-zero \emph{analytic} bounded
annihilator of the operator $A$ is clearly sufficient for the pure
singularity of its spectrum to the left of the point $\lambda_0$. Nevertheless, our Theorem asserts
that this function can be chosen to be outer in $\mathbb C_+$ as
well.
\end{rem}

\begin{rem}\label{simplicityrem}
Suppose that the operator $A$ is a self-adjoint operator with
simple spectrum. Then the trace class operator $V$ of the last
Theorem due to \eqref{maximalityA} can clearly be chosen
\cite{Birman} as a rank one operator in  Hilbert space $H$. In
this situation, the proof of Theorem \ref{sacase} can be
modified in the part concerning the choice of the annihilator in
the following way: the function $\gamma_A$ can be chosen as
$$
\gamma_A(\lambda):=\frac{1}{1-i(D(\lambda)-1)},
$$
where $D(\lambda):=1+\langle (A-\lambda)^{-1}\phi, \phi\rangle$ is
the perturbation determinant of the pair $A$, $A+\langle \cdot,
\phi\rangle\phi$ and $\phi$ is the generating vector for the
operator $A$.

The proof is a straightforward application of the explicit formula
for the resolvent of a rank one perturbation of a self-adjoint
operator, based on the Hilbert identity.
\end{rem}

We now pass over to the general case, i.e., the case when one has
no a-priori information on the spectral structure of the operator
$A$ near the endpoints of the interval under consideration. In
this case one faces the necessity to modify somewhat the
definition of annihilation. The following Theorem addresses this.

\begin{thm}\label{sacase2}
Let $A$ be a (possibly, unbounded) self-adjoint operator in the
Hilbert space $H$. Let $\Delta$ be an arbitrary Borel set on the
real line. Then the following two statements are equivalent.
\begin{itemize}
\item[(i)] The spectrum of $A$ in $\Delta$ is purely singular,
i.e., the intersection of absolutely continuous spectrum and the
set $\Delta$ is empty; \item[(ii)] There exist an outer bounded in
the upper half-plane non-trivial (i.e., non-zero) scalar function
$\gamma(\lambda)$ and an outer bounded in the upper half-plane
non-constant scalar function $\beta(\lambda)$ such that $\Im
\beta(\lambda)$ has non-tangential limits on the real line at
every point of the latter and these limits are zero everywhere on
$\mathbb R\setminus\Delta$ and non-zero everywhere on $\Delta$,
weakly annihilating the operator $A$ in the following sense:
$$
w-\lim_{\varepsilon\downarrow 0}
\gamma(A+i\varepsilon)(\beta(A+i\varepsilon)-\beta_*(A-i\varepsilon))=0,
$$
where $\beta_*(\lambda):=\bar\beta(\bar\lambda)$ is an outer
bounded in the lower half-plane analytic function.
\end{itemize}
\end{thm}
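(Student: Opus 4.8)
The plan is to mirror the proof of Theorem \ref{sacase}, with the single annihilating function there now split into two factors: an annihilator $\gamma$ carrying the functional-model cancellation, and a ``marker'' $\beta$ whose boundary imaginary part localizes the statement to $\Delta$.

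I would first dispose of the easy implication (ii)$\Rightarrow$(i). Since $\beta_*(k-i\varepsilon)=\overline{\beta(k+i\varepsilon)}$, the spectral theorem gives
$$
\gamma(A+i\varepsilon)(\beta(A+i\varepsilon)-\beta_*(A-i\varepsilon))=\int\gamma(k+i\varepsilon)\,2i\,\Im\beta(k+i\varepsilon)\,dE_A(k).
$$
Taking $u\in H_{ac}$ and pairing against $E_A(\delta)v$ for an arbitrary Borel $\delta\subset\Delta$, dominated convergence (for the absolutely continuous measure $d\mu_{u,v}$ the almost everywhere existing boundary values suffice) reduces the weak-annihilation hypothesis to $\int_\delta\gamma(k+i0)\,2i\,\Im\beta(k+i0)\,d\mu_{u,v}(k)=0$. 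Because $\gamma$ is outer (hence non-zero a.e.) and $\Im\beta\neq0$ on $\Delta$, their product is non-zero a.e.\ on $\Delta$; since $d\mu_{u,v}\ll dk$ this forces $d\mu_{u,v}|_\Delta\equiv0$ for every $v$, so $E_A(\Delta)u=0$. Thus $E_A(\Delta)H_{ac}=\{0\}$ and the absolutely continuous spectrum avoids $\Delta$.

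For the harder implication (i)$\Rightarrow$(ii) I would reuse verbatim the functional-model setup of Theorem \ref{sacase}: choose a non-negative trace class $V$ obeying \eqref{maximalityA}, put $\alpha=\sqrt{2V}$, build the model of $L^\|=A+iV$, and take for $\gamma$ the perturbation determinant $\gamma_A$ itself, which is outer, bounded and non-zero; this choice makes the ratios $\gamma/\gamma_A$ and $\gamma/\overline{\gamma_A}$ occurring in \eqref{existence2} bounded automatically. The marker $\beta$ should be outer and bounded with boundary imaginary part $h:=\Im\beta(\cdot+i0)$ supported in $\Delta$. As a bounded harmonic function is the Poisson integral of its boundary data, one then has the crucial identity
$$
\beta(A+i\varepsilon)-\beta_*(A-i\varepsilon)=\frac1\pi\int_\Delta h(t)\,\bigl[(A-t-i\varepsilon)^{-1}-(A-t+i\varepsilon)^{-1}\bigr]\,dt,
$$
which converts the annihilation into a weighted integral over $\Delta$ of resolvent differences. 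Multiplying by $\gamma_A(A+i\varepsilon)$, inserting the model formulae \eqref{samodel} exactly as in the derivation of \eqref{existence2}--\eqref{check this}, and splitting $u=u_s+u_{ac}$, I would argue that the singular part is annihilated for a.e.\ $t\in\Delta$ by the key fact \eqref{test} (vanishing of the resolvent difference on $H_s(A)$), while the absolutely continuous part is annihilated because, by hypothesis (i), its spectral density vanishes a.e.\ on $\Delta$. A dominated-convergence argument, in which the model supplies the $L^2$-bounds on $g_\pm,f_\pm$ that tame the otherwise divergent resolvent difference, then yields the weak limit zero.

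The main obstacle is the construction of $\beta$ for arbitrary Borel $\Delta$: one must produce a single \emph{bounded} outer function whose imaginary part has non-tangential limits at \emph{every} real point, vanishing exactly off $\Delta$ and non-vanishing throughout $\Delta$. The natural candidate, a Cauchy transform $\int_\Delta\rho(t)/(t-\lambda)\,dt$, is Herglotz (so has the correct sign of the imaginary part and is outer), but is generically unbounded near $\partial\Delta$, and the demand for genuine pointwise (not merely almost everywhere) limits off $\Delta$ is delicate precisely on $\partial\Delta$, where density-point pathologies obstruct both boundedness of the conjugate (Hilbert-transform) part and existence of the limits. Smoothing the density $\rho$ so that its Hilbert transform stays bounded while keeping $h=\pi\rho$ supported on and non-vanishing throughout $\Delta$ is exactly the ``endpoint control'' that was unavailable in Theorem \ref{sacase}, and is where the bulk of the analytic work will reside.
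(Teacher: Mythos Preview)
Your outline for (ii)$\Rightarrow$(i) matches the paper's argument essentially verbatim, and for (i)$\Rightarrow$(ii) you correctly identify $\gamma=\gamma_A$ and a Cauchy transform for $\beta$, together with the appeal to \eqref{test} on the singular subspace. Two points of divergence are worth noting.

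First, the paper decomposes $u$ according to $E_A(\Delta)H$ versus $E_A(\mathbb R\setminus\Delta)H$ rather than $H_s\oplus H_{ac}$. This is slightly more natural: under hypothesis (i) one has $E_A(\Delta)H\subset H_s$, so \eqref{test} applies directly on the first summand, while on the second summand the limit vanishes simply because $\beta(A+i\varepsilon)-\beta_*(A-i\varepsilon)\to\beta_0(A)$ \emph{strongly} (by the spectral theorem and dominated convergence, with $\beta_0(k)=2\pi i\,\Im\beta(k+i0)$) and $\beta_0(A)$ annihilates $E_A(\mathbb R\setminus\Delta)H$. The paper thus separates the two factors cleanly: the weak limit of $\gamma_A(A+i\varepsilon)$ is established once, via \eqref{mainforexistenceA}--\eqref{check this} with the integral extended to all of $\mathbb R$, and then combined with the strong limit of $\beta-\beta_*$. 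Your route through the Poisson-type integral of resolvent differences also works but entangles the two factors unnecessarily.

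Second, and more importantly, you have the difficulty exactly backwards. You describe the construction of $\beta$ as ``the main obstacle'' and ``where the bulk of the analytic work will reside'', invoking the endpoint problem that obstructed Theorem~\ref{sacase}. But the entire purpose of the two-factor formulation in Theorem~\ref{sacase2} is to \emph{eliminate} that endpoint problem: once the marker $\beta$ is split off from the annihilator $\gamma$, nothing forces the boundary argument of $\gamma$ to match anything, and $\gamma=\gamma_A$ works globally without modification. The construction of $\beta$ itself is then routine: take $\beta(\lambda)=\int b(k)/(k-\lambda)\,dk$ with $b\ge0$, $\mathrm{supp}\,b=\Delta$, $b\in L^2$, chosen smooth enough that $\beta$ is $C^1$ on the real line (so that the Hilbert transform of $b$ is bounded and pointwise boundary limits exist). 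This is disposed of in two sentences in the paper, not a section. The hard analysis was already done in Theorem~\ref{sacase}; Theorem~\ref{sacase2} is the structural observation that factoring the annihilator removes the endpoint obstruction.
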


\begin{proof}
We start with the proof of the implication (i)$\Rightarrow$(ii).

To begin with, let $\beta(\lambda)$ be a Riesz transform of a
square summable non-negative function $b(k)$ such that
$\text{supp}\ b=\Delta$:
\[ \beta(\lambda)=\int \frac{b(k)}{k-\lambda}dk.
\]
In order to satisfy the restrictions of the Theorem on the
imaginary part of $\beta$, further assume that $\beta$ is in
addition a $C^1$ function on the real line. Then clearly it is
outer bounded in the upper half-plane (in fact, even an
R-function), the imaginary part of it has boundary limits
everywhere on $\mathbb R$ \cite{Koosis} and moreover, these
boundary limits are equal to zero on $\mathbb R\setminus \Delta$
and are non-zero everywhere on $\Delta$.

Then $\beta_*(\lambda)$ is an outer bounded analytic continuation
of $\beta$ to the lower half-plane $\mathbb C_-$ through the
complement $\mathbb R\setminus\Delta$, whereas the jump of the
continued function through $\Delta$, which is proportional to
$\Im\beta(k+i0)$, is non-trivial everywhere on $\Delta$.

By the spectral theorem of a self-adjoint operator and then by the
Lebesgue dominated convergence theorem it is now easy to see that
$\beta(A+i\varepsilon)-\beta_*(A-i\varepsilon)\to \beta_0(A)$
strongly as $\varepsilon\to 0$, where $\beta_0(k):=2i\pi\Im
\beta(k+i0)$.

On the other hand, repeating the argument from the proof of the
last Theorem (namely, from \eqref{mainforexistenceA} to
\eqref{check this}, where the integral is extended from
$(-\infty,-\delta_0)$ to the whole real line) one arrives at the
conclusion that $\gamma(\lambda):=\gamma_A(\lambda),$ where
$\gamma_A$ is the same function as above, is such that the
operator family $\gamma(A+i\varepsilon)$ has a weak limit as
$\varepsilon\to 0$, given by \eqref{check this} with the
above-mentioned  change of the limits of integration.

It follows that $w-\lim_{\varepsilon\downarrow 0}
\gamma(A+i\varepsilon)(\beta(A+i\varepsilon)-\beta_*(A-i\varepsilon))$
exists and it's only left to prove that it is equal to zero. Let
first $u\in E_A (\Delta)$. Then $\langle
\gamma(A+i\varepsilon)(\beta(A+i\varepsilon)-\beta_*(A-i\varepsilon))u,v\rangle\to
0$ for all $v\in H$ by the same argument as in the proof of the
preceding Theorem (see \eqref{test} and below).

If on the other hand $u\in E_A(\mathbb R\setminus \Delta)H$, then
the named limit is zero since $\beta_0(A)|_{E_A(\mathbb R\setminus
\Delta)H}=0$ due to the fact that $\Im \beta(k+i0)=0$ for all
$k\in \mathbb R\setminus \Delta$.

The proof of the inverse implication (ii)$\Rightarrow$(i) is
nothing but a slight modification of the corresponding implication
of Theorem \ref{sacase}

Indeed, let the vector $u\not= 0$, $u\in E_A(\Delta)H$ belong to
the absolutely continuous spectral subspace $H_{ac}$. Then, again
by the spectral theorem and by Lebesgue dominated convergence
theorem it is easy to see that
$$
\int_\delta \gamma(k+i0)(\beta(k+i0)-\bar\beta(k+i0) )d\mu_{u,v}
(k)=0
$$
(by taking $E_A(\delta)v$ instead of $v$) for an arbitrary Borel
set $\delta\subset \delta$. Since boundary values of $\gamma$ are
non-zero almost everywhere on the real line and by assumption the
boundary values of the imaginary part of $\beta$ are non-zero in
$\Delta$, this implies that the absolutely continuous measure
$d\mu_{u,v}\equiv 0$ for all $v\in H$.

This completes the proof.

\end{proof}

\specialsection{On the analytic properties of the resolvent}

We take this opportunity to prove yet another result. We begin with the following observation, well-known from the mathematical scattering theory. Consider
a self-adjoint operator $A$. Then there exists a linear set $\tilde H_{a.c.}$ dense in the absolutely continuous spectral subspace of $A$ such that
$$
\int \|\beta \exp (iAt)u\|^2dt<\infty
$$
for all $u\in \tilde H_{a.c.}$ and any non-negative operator $\beta\in\mathfrak S_2$ (see, e.g., \cite{Yafaev}). Using the Fourier transform and Parseval's
identity,
it's easy to see \cite{MIAN} that the last condition is equivalent to:
$$
\beta (A-\lambda)^{-1}u\in H^2_\pm(Ran\ \beta)
$$
for all $u\in \tilde H_{a.c.}$ Taking an operator $V\in \mathfrak
S_1$ as in the proof of the previous Theorem, i.e., a non-negative
trace class operator such that the condition \eqref{maximalityA}
is satisfied, we can further obtain \cite{MIAN} the following
description of the absolutely continuous spectral subspace of the
operator $A$:
$$
H_{a.c.}=clos \{u| \sqrt{V}(A-\lambda)^{-1}u\in H^2_\pm (E)\},
$$
where as in Section 2 $E$ is the auxiliary Hilbert space, being the closed image of the operator $V$.

In this Section, we derive an analogous characterization for the singular spectral subspace $H_s$ of a self-adjoint operator $A$. Namely, the following
Theorem holds.
\begin{thm}\label{strongdescription}
Let $A$ be a self-adjoint operator in the Hilbert space $H$. Let $V\in\mathfrak S_1$ be a positive trace class operator in $H$ such that \eqref{maximalityA}
holds. Then if the vector $u$ belongs to the singular spectral subspace $H_s$ of $A$, then
the vector $\sqrt{V} (A-\lambda)^{-1}u$ belongs to
vector Smirnov classes $N^2_\pm(E)$ \cite{Nik1}, i.~e., it can be
represented as $h_\pm(\lambda)/\delta_\pm(\lambda)$, where $h_\pm\in
H^2_\pm(E)$ and $\delta_\pm(\lambda)$ are scalar bounded outer analytic
functions in half-planes $\mathbb C_\pm$, respectively. Here the functions
$\delta_\pm$ can be chosen independently of vector $u$.
\end{thm}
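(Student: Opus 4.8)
The plan is to read the vector $\sqrt V(A-\lambda)^{-1}u$ off inside the functional model constructed in the proof of Theorem \ref{sacase}, and then to convert the resulting expression into a Smirnov-class representation by means of the factorization of $\Theta_A^{-1}$ already obtained there. I keep all the notation of that proof: $\alpha=\sqrt{2V}\in\mathfrak S_2$, the characteristic function $S(\lambda)$ of $L^\|=A+iV$, the outer contractions $\Theta_A=(I+S)/2$ and $\Theta_A'=(I+S^*(\bar\lambda))/2$, their scalar outer determinants $\gamma_A,\gamma_A'$, and the bounded analytic operator functions $\Omega=\gamma_A\Theta_A^{-1}$ (in $\mathbb C_+$) and $\Omega'=\gamma_A'\Theta_A'^{-1}$ (in $\mathbb C_-$). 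The one point to keep in mind is that $\gamma_A$ is scalar \emph{outer}, hence zero-free in $\mathbb C_+$, so that $\Theta_A^{-1}=\Omega/\gamma_A$ is actually analytic throughout $\mathbb C_+$, its unbounded growth occurring only at the boundary.

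First I would establish the model expression for $\sqrt V(A-\lambda)^{-1}u$. Writing $u\leftrightarrow(\tilde g,g)\in K$, so that by the very definition of $K$ one has $g_+\in H^2_+(E)$ and $g_-\in H^2_-(E)$, the standard computation of the action of the channel operator $\alpha$ in the Nagy--Foia\c s--Pavlov model (see \cite{MIAN,NabokoVeselov}) combined with the resolvent formulae \eqref{samodel} yields, up to an inessential numerical constant and modulo terms manifestly in $H^2_\pm(E)$,
\begin{equation*}
\sqrt V(A-\lambda)^{-1}u=\tfrac1{\sqrt2}\,\Theta_A^{-1}(\lambda)\,g_+(\lambda)\quad(\operatorname{Im}\lambda>0),\qquad
\sqrt V(A-\lambda)^{-1}u=\tfrac1{\sqrt2}\,\Theta_A'^{-1}(\lambda)\,g_-(\lambda)\quad(\operatorname{Im}\lambda<0).
\end{equation*}
This is precisely the correction term of \eqref{samodel}, now interpreted as the component of the resolvent along the auxiliary channel $E$. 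Deriving it rigorously --- in particular justifying that applying $\sqrt V$ to the model resolvent extracts exactly $\tfrac1{\sqrt2}\Theta_A^{-1}g_+$ rather than some genuinely more complicated object --- is the main obstacle of the whole argument; once it is in hand, the remainder is essentially formal.

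Granting this, the conclusion is immediate. Since $\Omega$ is bounded analytic in $\mathbb C_+$ and $g_+\in H^2_+(E)$, the product $h_+:=\tfrac1{\sqrt2}\,\Omega g_+$ lies in $H^2_+(E)$; substituting $\Theta_A^{-1}=\Omega/\gamma_A$ gives $\sqrt V(A-\lambda)^{-1}u=h_+(\lambda)/\gamma_A(\lambda)$ with $\gamma_A$ a scalar bounded outer function. This is exactly the asserted membership $\sqrt V(A-\lambda)^{-1}u\in N^2_+(E)$, with denominator $\delta_+:=\gamma_A$ depending only on $A$ and $V$ and not on $u$. The lower half-plane is treated identically, producing $\delta_-:=\gamma_A'=\overline{\gamma_A(\bar\lambda)}$ and $h_-:=\tfrac1{\sqrt2}\,\Omega'g_-\in H^2_-(E)$.

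I would close with a remark on the role of the hypothesis $u\in H_s$. The representation in fact holds verbatim for every $u\in H$, since $g_\pm\in H^2_\pm(E)$ for an arbitrary element of $K$; the reason for stating it for $H_s$ is that it is the natural singular-subspace counterpart of the absolutely continuous characterization $H_{a.c.}=\operatorname{clos}\{u:\sqrt V(A-\lambda)^{-1}u\in H^2_\pm(E)\}$ recalled above. Indeed, dividing $h_\pm$ by the outer function $\gamma_A$ (respectively $\gamma_A'$) keeps one inside the Hardy class exactly for (the closure of) the absolutely continuous vectors, whereas for singular vectors the quotient generically leaves $H^2_\pm(E)$ but always remains in the Smirnov class --- which is the precise content of the Theorem.
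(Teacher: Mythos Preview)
Your proof is correct and follows essentially the same route as the paper: the key identity $\alpha(A-\lambda)^{-1}u=-\sqrt{2\pi}\,\Theta_A^{-1}(\lambda)g_+(\lambda)$ (which the paper simply cites from \cite{MIAN} as \eqref{pereschet} rather than extracting it from \eqref{samodel}), followed by the factorization $\Theta_A^{-1}=\Omega/\gamma_A$ with $\Omega$ bounded analytic and $\gamma_A$ scalar outer, giving $\delta_+=\gamma_A$ independently of $u$. Your closing observation that the hypothesis $u\in H_s$ is never actually invoked is also borne out by the paper's own argument, which uses only that $g_+\in H^2_+(E)$ ``as $u\in H$''.
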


\begin{proof}
We again use the functional model constructed based on the dissipative operator $A+iV$.

Let now $u\in H_{s}$. The following identities hold (see \cite{MIAN}):
\begin{equation}\label{pereschet}
\begin{gathered}
\sqrt{2 \pi} g_+(\lambda) =-\Theta_A(\lambda) \alpha
(A-\lambda)^{-1}u,\quad Im\ \lambda>0,
\\
\sqrt{2 \pi} g_-(\lambda) =-\Theta_A'(\lambda) \alpha
(A-\lambda)^{-1}u,\quad Im\ \lambda<0
\end{gathered}
\end{equation}
Here the operator-functions $\Theta_A(\lambda)$ and $\Theta_A'(\lambda)$ are defined by the identities
$\Theta_A(\lambda)=(I+S(\lambda))/2$ and $\Theta_A'(\lambda)=
(I+S^*(\overline\lambda))/2$ ($S$ being the characteristic function of the dissipative operator $A+iV$), and are outer $\mathfrak S_1$--valued
contractions in the half-planes $\mathbb C_+$ and $\mathbb C_-$,
respectively.

Within the conditions of Theorem
\ref{strongdescription} both operator-functions $\Theta_A(\lambda)$ and $\Theta_A'(\lambda)$ also possess outer determinants
in their respective half-planes
\cite{Nagy}. Therefore, by the uniqueness theorem for scalar bounded
analytic functions \cite{Hoffman}, they are invertible
for almost all real $k$.

Then we obtain immediately,
that for all $\lambda\in \mathbb C_+$
$$
\sqrt{2V} (A-\lambda)^{-1} u=-\sqrt{ 2 \pi}
\Theta_A^{-1}(\lambda)g_+(\lambda)= -\sqrt{2 \pi}
\delta_+^{-1}(\lambda) \Omega(\lambda)g_+(\lambda),
$$
where
$\Omega(\lambda)\Theta_A(\lambda)=\Theta_A(\lambda)\Omega(\lambda)=\delta_+(\lambda)
I$ with a bounded operator-function $\Omega(\lambda)$, i.e.,
$\delta_+$ is the determinant of the operator function
$\Theta_A$. It remains to point out (see Section 2), that the function $g_+(\lambda)$ belongs
to $H^2_+(E)$ as $u\in H$. Application of a similar
argument to the vector $g_-$ completes the proof
\end{proof}

\begin{rem}
As it is easily seen from the definition of $\Theta_A(\lambda)$ and $\Theta_A'(\lambda)$, the functions $\delta_\pm(\lambda)$ in the statement
of Theorem \ref{strongdescription} can be chosen so that $\delta_-(\lambda)=\bar\delta_+(\bar\lambda)$.
\end{rem}

In a similar way we are able to give a ``weak'' version of the previous Theorem. Indeed, one can easily ascertain
(see, e.g., \cite{Yafaev,MIAN}) on the basis of F. and M. Riesz theorem  \cite{Hoffman} that
the absolutely continuous subspace of a self-adjoint operator $A$ can be alternatively characterized as follows:
$$
H_{a.c.}=clos \{u| \langle(A-\lambda)^{-1}u,v\rangle\in H^2_\pm\}\quad \text{for all } v\in H.
$$
The following Theorem gives an analogous representation for the singular spectral subspace.
\begin{thm}\label{weakdescription}
Let $A$ be a self-adjoint operator in the Hilbert space $H$.
Then if the vector $u\in H$ belongs to the singular spectral subspace
$H_s$, then
the function $\langle (A-\lambda)^{-1}u, v \rangle$ belongs to
Smirnov classes $N^1_\pm$ for all $v\in H$, i.~e., it can be
represented as $h_\pm(\lambda)/\delta_\pm(\lambda)$, where $h_\pm\in
H^1_\pm$ and $\delta_\pm(\lambda)$ are bounded scalar outer analytic
functions in half-planes $\mathbb C_\pm$, respectively.
Here the functions
$\delta_\pm$ are independent of $v\in H$ and can be chosen  independently of vector $u$.
\end{thm}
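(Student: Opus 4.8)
The plan is to derive the statement from the ``strong'' version, Theorem~\ref{strongdescription}, keeping the functional model built on the dissipative operator $A+iV$. The key preliminary observation is that for $u\in H_s$ the absolutely continuous part of the spectral measure of $A$ does not charge $u$, so that for every $v\in H$ the scalar measure $d\mu_{u,v}(k):=\langle dE_A(k)u,v\rangle$ is \emph{purely singular}; hence $\langle(A-\lambda)^{-1}u,v\rangle=\int(k-\lambda)^{-1}\,d\mu_{u,v}(k)$ is the Cauchy transform of a finite singular measure. What has to be shown is that, after multiplication by one fixed bounded outer function, this Cauchy transform lies in $H^1_\pm$. The natural candidate for the universal denominator is the determinant $\delta_\pm$ of $\Theta_A$ (resp.\ $\Theta_A'$) already used in Theorem~\ref{strongdescription}, multiplied by an innocuous extra outer factor $(\lambda\pm i)^{-2}$ that is forced upon us because on a half-plane $H^2\not\subseteq H^1$.

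First I would establish the representation on a total family of test vectors. For $v=\sqrt V\,w\in\operatorname{Ran}\sqrt V$ one has $\langle(A-\lambda)^{-1}u,v\rangle=\langle\sqrt V(A-\lambda)^{-1}u,w\rangle$, and Theorem~\ref{strongdescription} expresses $\sqrt V(A-\lambda)^{-1}u$ as $h_+(\lambda)/\delta_+(\lambda)$ with $h_+\in H^2_+(E)$; thus the left-hand side equals $\langle h_+(\lambda),w\rangle/\delta_+(\lambda)$, which is of class $N^2_+$. For the larger family $v=(A-\mu)^{-1}Vw$ with $\mu\in\mathbb C_+$ --- total in $H$ by \eqref{maximalityA}, one half-plane sufficing by cyclicity of the resolvent --- the second resolvent identity gives $\langle(A-\lambda)^{-1}u,v\rangle=(\lambda-\bar\mu)^{-1}\bigl(F(\lambda)-F(\bar\mu)\bigr)$ with $F(\lambda)=\langle\sqrt V(A-\lambda)^{-1}u,\sqrt V w\rangle=\langle h_+(\lambda),\sqrt Vw\rangle/\delta_+(\lambda)$ and $(\lambda-\bar\mu)^{-1}$ bounded analytic in $\mathbb C_+$. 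Multiplying through by $\delta_+(\lambda)(\lambda+i)^{-2}$ turns the numerator into a sum of products of two scalar half-plane $H^2$ functions (here the two factors $(\lambda+i)^{-2}$ and $(\lambda-\bar\mu)^{-1}$ supply the needed integrability), hence into an element of $H^1_+$. On this total set one therefore obtains the desired $N^1_+$ representation with denominator $\delta_+(\lambda)(\lambda+i)^{-2}$ depending only on $A$ and $V$.

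It remains to pass from this total family to an arbitrary $v\in H$. Since the total family is dense and $H^1_+$ is complete, it suffices to prove the \emph{uniform} bound $\bigl\|\delta_+(\cdot)(\cdot+i)^{-2}\langle(A-\cdot)^{-1}u,v\rangle\bigr\|_{H^1}\le C_u\,\|v\|$ on that family, for then the linear map $v\mapsto\delta_+(\lambda)(\lambda+i)^{-2}\langle(A-\lambda)^{-1}u,v\rangle$ extends continuously to a map $H\to H^1_+$ agreeing with the pointwise limit, which furnishes the $N^1_+$ representation for every $v$. The lower half-plane is treated symmetrically, with $\delta_-(\lambda)=\overline{\delta_+(\bar\lambda)}$ equal to the determinant of $\Theta_A'$; and since $\delta_\pm$ are manufactured from $A$ and $V$ alone, they are automatically independent of both $u$ and $v$.

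I expect this uniform bound to be the main obstacle. The difficulty is that the clean factorization above controls the $H^1$ norm through $\|\sqrt V w\|$ (or $\|w\|$), which can be far larger than $\|v\|=\|(A-\mu)^{-1}Vw\|$ when $\sqrt V$ has small singular values; a bound genuinely in terms of $\|v\|$ must exploit the singularity of the measure $\mu_{u,v}$ --- it is exactly the absent absolutely continuous part that would otherwise destroy the $L^1$ integrability of the boundary values --- together with the precise manner in which the outer determinant $\delta_\pm$ cancels the growth of the resolvent. Equivalently, in the model one may split $\langle(A-\lambda)^{-1}u,v\rangle$, via \eqref{samodel} and \eqref{pereschet}, into a term carrying $\Theta_A^{-1}(\lambda)$ --- namely the inner product $\langle\sqrt V(A-\lambda)^{-1}u,\,\sqrt V(L^\|-\bar\lambda)^{-1}v\rangle_E$, whose first argument is the $N^2_+$ function $h_+/\delta_+$ and the conjugate of whose second argument is an honest $H^2_+(E)$ function of norm $\lesssim\|v\|$ ($L^\|$ being dissipative), so that after clearing $\delta_+$ it is a finite sum of products of half-plane $H^2$ functions and hence contributes an $H^1_+$ term of norm $\lesssim_u\|v\|$ --- and a remaining ``free'' term equal to $\langle(A-iV-\lambda)^{-1}u,v\rangle$; this reduces the theorem to the $H^1_\pm$ membership of this last, anti-dissipative, resolvent matrix element, which is where the whole weight of the argument sits.
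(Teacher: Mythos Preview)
Your proposal ends one step short of the paper's argument, and the step you declare hardest is in fact the easiest.

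The density route you try first (controlling $v$ through $\sqrt V\,w$ and $(A-\mu)^{-1}Vw$) is not the paper's approach, and you are right that the uniform bound in $\|v\|$ rather than in $\|w\|$ is not available along that path.

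Your second route, via the model splitting \eqref{samodel}, \emph{is} the paper's approach. Your resolvent-identity decomposition
\[
\langle(A-\lambda)^{-1}u,v\rangle=\langle(L^{-\|}-\lambda)^{-1}u,v\rangle-\tfrac{i}{2}\,\bigl\langle\alpha(A-\lambda)^{-1}u,\ \alpha(L^{\|}-\bar\lambda)^{-1}v\bigr\rangle_E
\]
is exactly what \eqref{samodel} encodes. The second summand is the term carrying $\Theta_A^{-1}(\lambda)$, and is handled just as you describe: Theorem~\ref{strongdescription} gives $\Theta_A^{-1}(\lambda)g_+(\lambda)=h_+(\lambda)/\delta_+(\lambda)$ with $h_+\in H^2_+(E)$, while $\overline{f_-(\bar\lambda)}\in H^2_+(E)$ with norm $\lesssim\|v\|$; clearing $\delta_+$ puts this piece in $H^1_+$.

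The gap is the ``free'' term. You leave $\langle(A-iV-\lambda)^{-1}u,v\rangle$ unresolved and declare that ``the whole weight of the argument sits'' there. But in the model, for $\mathrm{Im}\,\lambda>0$, since $(L^{-\|}-\lambda)^{-1}=P_K\tfrac{1}{k-\lambda}\big|_K$ one has
\[
\langle(L^{-\|}-\lambda)^{-1}u,v\rangle
=\Bigl\langle\tfrac{1}{k-\lambda}\binom{\tilde g}{g},\binom{\tilde f}{f}\Bigr\rangle_{\mathcal H}
=\int_{\mathbb R}\frac{m(k)}{k-\lambda}\,dk,
\]
where $m(k)$ is the pointwise density of the $\mathcal H$-inner product, an honest $L^1(\mathbb R)$ function with $\|m\|_{L^1}\le\|u\|\,\|v\|$. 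Thus the free term is simply the Cauchy transform of an $L^1$ function; the paper then multiplies through by the bounded outer factor $\delta_+(\lambda)\nu(\lambda)$, $\nu(\lambda)=(\lambda+i)^{-1}$, and reads off the $N^1_+$ representation. In particular the hypothesis $u\in H_s$ enters \emph{only} through Theorem~\ref{strongdescription} in the $\Theta_A^{-1}$ piece; your expectation that the singularity of $\mu_{u,v}$ must be invoked to control the free term is misplaced --- that term requires nothing about $u$ beyond $u\in H$.
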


\begin{proof}
We will again use the model description of the resolvent of the operator $A$ \eqref{samodel}, from where it follows that
$$
\langle (A-\lambda)^{-1}u,v\rangle=
\left\langle \frac 1{k-\lambda}\binom{\tilde g}{g},\binom{\tilde f}{f}\right\rangle
-\left\langle \frac 1{k-\lambda} \binom{\frac 12\Theta_A^{-1}(\lambda)g_+(\lambda)}{0},\binom{\tilde
f}{f}\right\rangle,
$$
where $(\tilde f,f)$ is the model image of the vector $v$. Let $u\in H_s$. The first term on the right hand side is
clearly the Cauchy transformation of an $L_1$-function, whereas the second one can be rewritten by
residue calculation in
the following way:
$$
\left\langle \frac 1{k-\lambda} \binom{\frac 12 \Theta_A^{-1}(\lambda)g_+(\lambda)}{0},\binom{\tilde
f}{f}\right\rangle =-2 \pi i \langle \frac 12 \Theta_A^{-1}(\lambda) g_+(\lambda),
f_-(\overline{\lambda})\rangle_E.
$$
By Theorem \ref{strongdescription} the vector $u$ is such that $\sqrt{V} (A-\lambda)^{-1}u\in N^2_+(E)$,
and therefore by \eqref{pereschet} again, $\Theta_1^{-1} (\lambda)
g_+(\lambda)=h_+(\lambda)/\delta_+(\lambda)$
for some $h_+\in H^2_+(E)$ and some outer bounded in
the upper half-plane function $\delta_+$.
It follows that if one puts $\nu(\lambda):=1/(\lambda+i)$,
\begin{multline*}
\langle (A-\lambda)^{-1}u,v\rangle
= k_1(\lambda) -2\pi i \frac 1 {\delta_+(\lambda)} \langle \frac 12 h_+(\lambda),
f_-(\overline{\lambda})\rangle_E\equiv\\
\frac 1{\delta_+(\lambda)\nu(\lambda)}[k_1(\lambda)\delta_+(\lambda)\nu(\lambda) -2\pi i  \langle \frac 12 h_+(\lambda),
f_-(\overline{\lambda})\rangle_E \nu(\lambda)]\in N^1_+,
\end{multline*}
since $k_1(\lambda):= \langle \frac 1{k-\lambda}\binom{\tilde g}{g},\binom{\tilde f}{f}\rangle
$ and $\overline{f_-(\overline{\lambda})}\in H^2_+(E)$.

An analogous argument applied in the case of $\mathbb C_-$ completes the proof.
\end{proof}
\begin{rem}
Note that the functions $\delta_\pm(\lambda)$ appearing in the proof of the last Theorem are the same as in the proof of Theorem
\ref{strongdescription}, i.e., these can be chosen to be equal to the determinants of the operator-functions $\Theta_A(\lambda)$ and
$\Theta_A'(\lambda)$, respectively. Thus, the corresponding outer factors in Theorems \ref{strongdescription} and \ref{weakdescription} admit
the simplest (and explicitly computable) form in the situation when the spectrum of the operator $A$ is simple, see Remark \ref{simplicityrem}.
\end{rem}

\subsection*{Acknowledgements.}

Both authors express their gratitude to Prof. David Pearson for
the interest expressed by him to their research and for the
question that motivated this paper.

The first author is grateful to Prof. A. Sobolev for fruitful discussions during the author's stay in UCL.

The first author is grateful to the Dept. of Mathematics,
University College London where parts of this work were done for
hospitality.

\end{document}